\newtheorem{theorem}{Theorem}
\newtheorem{corollary}[theorem]{Corollary}
\newtheorem{lemma}[theorem]{Lemma}
\newtheorem*{theorem*}{Theorem}
\newtheorem{algorithm}[theorem]{Algorithm}
\def\irr#1{{\rm  Irr}(#1)}
\def\phi{{\varphi}}
\newcommand{\cl}[1]{{\rm Cl} (#1)}
\begin{document}

\title[Super character theories]{Groups with exactly two supercharacter theories}
\author[Burkett]{Shawn Burkett}
\author[Lamar]{Jonathan Lamar}
\author[Lewis]{Mark L. Lewis}
\author[Wynn]{Casey Wynn}

\address{Department of Mathematics, University of Colorado, Boulder, CO 80309}
\email{shawn.burkett@colorado.edu}

\address{Department of Mathematics, University of Colorado, Boulder, CO 80309}
\email{jonathan.lamar@colorado.edu}

\address{Department of Mathematical Sciences, Kent State University, Kent, OH 44242}
\email{lewis@math.kent.edu}

\address{Department of Mathematical Sciences, Kent State University, Kent, OH 44242}
\email{cwynn@math.kent.edu}

\subjclass[2010]{ 20C15.}
\keywords{supercharacter theories, simple groups, rational groups}

\begin{abstract}
In this paper, we classify those finite groups with exactly two supercharacter theories.  We show that the solvable groups with two supercharacter theories are $\mathbb{Z}_3$ and $S_3$.  We also show that the only nonsolvable group with two supercharacter theories is ${\rm Sp} (6,2)$.
\end{abstract}

\maketitle

\bigskip
\section{Introduction}

In \cite{DiIs}, Diaconis and Isaacs define supercharacter theories for finite groups, formalizing previous approaches to understanding the characters of $U_{n}(\mathbb{F}_{q})$ and other algebra groups.  More recently, Johnson and Humphries showed a strong connection exists between supercharacter theories and central subalgebras of Schur rings over finite groups \cite{HuJo}.  Using this connection, as well as results on Schur rings by Leung and Man \cite{LeMa} and Tamaschke \cite{Tam}, Anders Hendrickson provides constructions for supercharacter theories and classifies all supercharacter theories for cyclic groups \cite{AHen}.  We will show that the only groups with exactly two supercharacter theories, in particular the trivial supercharacter theories, are $S_3$, $\mathbb{Z}_3$, and  ${\rm Sp}(6,2)$.

Throughout this paper $G$ will be a finite group.  We write $\irr G$ for the set of irreducible characters of $G$ and $\cl G$ for the set of conjugacy classes of $G$.  A {\it supercharacter theory} of $G$ is defined in \cite{DiIs} to be a pair $(\mathcal {X}, \mathcal {K})$ where $\mathcal {X}$ is a partition of $\irr G$ and $\mathcal {K}$ is a partition of $G$ that satisfy the following (1) $|\mathcal {X}| = |\mathcal {K}|$, (2) for each $X \in \mathcal {X}$, there exists a character $\chi_X$ whose irreducible constituents lie in $X$ such that $\chi_X$ is constant on the members of $\mathcal {K}$, and (3) $\{ 1 \} \in \mathcal {K}$.  This definition implies that each ``supercharacter'' $\chi_{X}$ is a positive integer constant multiple of $\sum_{\phi \in X} \phi (1)\cdot\phi$ and that $\{ 1_{G} \} \in \mathcal{X}$.  In \cite{DiIs}, they show that if we know a partition $\mathcal X$ of $\irr G$ yields a supercharacter theory, then the corresponding the partition $\mathcal {K}$ of $G$ is uniquely determined, so often we will only provide the partition $\mathcal X$ for a supercharacter theory.  When $G$ is a group, we will write $\rm Sup (G)$ to denote the set of all supercharacter theories for $G$.

On page 2363 of \cite{DiIs}, they mention that there exist groups having only two supercharacter theories, particularly $S_3$.  However, they give no sense of how rare these groups are.  In this note, we consider groups with only two supercharacter theories, and we will show that in fact there are only two solvable groups and one nonabelian simple group that have only two supercharacter theories.

The third and fourth authors would like to thank Nat Thiem for several useful conversations regarding this work, and for suggesting this problem to the first and second authors.

\section{The solvable case}

Every group has the following two supercharacter theories: $m(G)$ where $ \irr G$ is partitioned into singleton sets and $G$ is partitioned into its conjugacy classes, and $M(G) = (\{ \{1_G\}, \irr G - \{1_G\} \}, \left\{\left\{1\right\}, G-\left\{1\right\}\right\})$.  These two supercharacter theories are the same if and only if $\rho_G - 1_G$ is either $0$ or irreducible where $\rho_G$ is the regular character for $G$.  That occurs only when $G$ is the trivial group or when $G = \mathbb{Z}_2$.  For all other groups, these two supercharacter theories will be distinct.

Notice that if $G$ is either $\mathbb{Z}_3$ or $S_3$, then it is not difficult to see since $\rho_G - 1_G$ has only two irreducible constituents that $m(G)$ and $M(G)$ are the only possible supercharacter theories.  We now show that these are the only solvable groups with two supercharacter theories.  Following the usual convention, we say that $G$ is a {\it real group} if all the characters in $\irr G$ are real valued.

\begin{theorem} \label{one}
If $G$ is a group satisfying $|\rm {Sup} (G)| = 2$, then either $G \cong S_3$ or $G$ is a real simple group.  In particular, if $G$ is solvable and $|\rm {Sup} (G)| = 2$, then $G$ is either $\mathbb{Z}_3$ or $S_3$.
\end{theorem}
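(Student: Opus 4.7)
The plan is that if $|\mathrm{Sup}(G)| = 2$ then $\mathrm{Sup}(G) = \{m(G), M(G)\}$, so any naturally constructed supercharacter theory of $G$ must coincide with one of these two. I would squeeze $G$ from two directions: a normal-subgroup construction to restrict its subgroup structure, and a complex-conjugation construction to control its rationality.

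First I would rule out proper nontrivial normal subgroups. Suppose $N$ is normal in $G$ with $1 < N < G$. Applying Hendrickson's $\ast$-product construction from \cite{AHen} to the (trivially $G$-invariant) supercharacter theories $M(N)$ of $N$ and $M(G/N)$ of $G/N$ yields a supercharacter theory of $G$ whose partition of $G$ is $\{\{1\},\, N \setminus \{1\},\, G \setminus N\}$. This partition has three blocks, so it cannot equal the two-block $M(G)$, and therefore must equal $m(G)$. Hence $G$ has exactly three conjugacy classes, which classically forces $G \cong \mathbb{Z}_3$ or $G \cong S_3$; the former has no proper nontrivial normal subgroup, so this subcase forces $G \cong S_3$.

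Next assume $G$ is simple. Consider the supercharacter theory obtained by pairing each $\chi \in \irr G$ with $\overline{\chi}$ and each class $K \in \cl G$ with $K^{-1}$; this is the standard ``real'' supercharacter theory. If $G$ is not a real group, then at least one such pair has size two, so this theory is not $m(G)$ and therefore must equal $M(G)$. But then $\irr G \setminus \{1_G\}$ is a single complex-conjugation orbit of size at most $2$, forcing $|\irr G| \le 3$ and hence $|\cl G| \le 3$; among simple groups only $\mathbb{Z}_2$ (with $|\mathrm{Sup}| = 1$) and $\mathbb{Z}_3$ qualify, so the non-real simple branch collapses to $G \cong \mathbb{Z}_3$. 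This establishes the first assertion of the theorem.

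For the solvable corollary, a solvable simple group is $\mathbb{Z}_p$ for some prime $p$. By Hendrickson's classification of supercharacter theories of cyclic groups in \cite{AHen}, $|\mathrm{Sup}(\mathbb{Z}_p)|$ equals the number of divisors of $p-1$, which is $2$ precisely when $p = 3$. Combined with the first two paragraphs, the only solvable groups satisfying $|\mathrm{Sup}(G)| = 2$ are $\mathbb{Z}_3$ and $S_3$. The main technical points to cite are that the $\ast$-product of $M(N)$ and $M(G/N)$ and the complex-conjugation pairing really are supercharacter theories; once those are granted, the rest of the argument is just counting blocks and invoking the classical classification of groups with at most three conjugacy classes.
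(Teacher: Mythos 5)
Your proposal is correct and follows essentially the same route as the paper: a three-block supercharacter theory built from a proper nontrivial normal subgroup must coincide with $m(G)$ and forces $G \cong S_3$, while the complex-conjugation supercharacter theory handles the simple case, splitting it into $\mathbb{Z}_3$ and the real groups. The only differences are cosmetic: you extract the $S_3$ conclusion on the superclass side (three conjugacy classes, then the classical classification) rather than via the paper's analysis of $\irr {G/N}$ and $\irr {G \mid N}$, and you settle the prime-cyclic case by citing Hendrickson's count of supercharacter theories of $\mathbb{Z}_p$ instead of the paper's observation that the only real nontrivial cyclic group is $\mathbb{Z}_2$.
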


\begin{proof}
	Now, suppose $G$ is group that has only the two supercharacter theories: $m(G)$ and $M(G)$.  Suppose $N$ is a normal subgroup of $G$ so that $1 < N < G$.  Let $\rho_{G/N}$ be the inflation of the regular character of $G/N$ to $G$.  It is not difficult to see that $\mathcal{X} = \{ \{ 1_G \}, \irr {G/N} - \{1_G\}, \irr G - \irr {G/N} \}$ yields a supercharacter theory for $G$.  Under the assumption that $G$ only has two supercharacter theories, this one must be either $m(G)$ or $M(G)$.  It is not difficult to see that it cannot be $M(G)$, so it must be $m(G)$.  It follows that $\rho_{G/N} - 1_G$ and $\rho_G - \rho_{G/N}$ must be multiples of irreducible characters.  This implies that $\irr {G/N}$ has only one nonprincipal irreducible character, and so, $G/N \cong \mathbb{Z}_2$.  It also follows that $\irr {G \mid N}$ has a unique irreducible character.  This implies that $G/N$ acts transitively on $\irr N - \{ 1_N \}$.  Since $G/N$ has size $2$, it follows that $N$ has size $3$.  We conclude that $G \cong S_3$.

	We now assume that $G$ has no proper nontrivial normal subgroups,  i.e., $G$ is a simple group.  Let $\mathcal {R}$ be the supercharacter theory obtained by taking the orbits of complex conjugation on $\irr G$, as on page 2360 of \cite{DiIs}.  If $\mathcal {R} = M(G)$, then it follows that $\irr G - \{ 1_G \}$ consists of two characters that are complex conjugates, and so, we conclude that $G \cong \mathbb{Z}_3$.  (Recall we are assuming that $M(G) \ne m(G)$.)  If $\mathcal {R} = m(G)$, then all characters in $\irr G$ are real valued.  If $G$ is solvable, then $G$ is cyclic.  The only real cyclic group is $Z_2$, so this implies $G \cong \mathbb{Z}_2$.  We conclude if $G$ is solvable, then $G$ is either $\mathbb{Z}_3$ or $S_3$.
\end{proof}

\section{The nonsolvable case}

A group $G$ is said to be \emph{rational} if all the irreducible characters are rational valued.  The nonabelian simple groups that are rational have been classified by Feit and Seitz in \cite{FeSe}.   We note that their result relies on the classification of finite simple groups, and thus, our result also relies on the classification of finite simple groups.

\begin{theorem}
If $G$ is nonsolvable and $|\rm {Sup} (G)| = 2$, then $G$ is ${\rm Sp} (6,2)$.
\end{theorem}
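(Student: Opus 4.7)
The plan is to use Theorem~\ref{one} to reduce to the case that $G$ is a real nonabelian simple group, then force $G$ to be rational by producing an auxiliary Galois-orbit supercharacter theory, then invoke the Feit-Seitz classification of rational simple groups to obtain a short list of candidates, and finally dispatch each candidate in turn.

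For the Galois reduction step, consider the supercharacter theory $\mathcal{G}$ whose $\irr G$-partition is given by the orbits of $\mathrm{Gal}(\mathbb{Q}(\zeta_{|G|})/\mathbb{Q})$ on $\irr G$, with the corresponding action on conjugacy classes producing the partition of $G$. By the hypothesis $|\mathrm{Sup}(G)| = 2$, $\mathcal{G}$ must be either $m(G)$ or $M(G)$. If $\mathcal{G} = M(G)$, then every nontrivial irreducible character of $G$ is Galois conjugate to every other, and in particular all nontrivial irreducibles share a common degree; this fails for every nonabelian finite simple group (for example, in a group of Lie type the Steinberg character and a principal-series constituent have different degrees, and the sporadic and alternating cases are handled by inspection of the ATLAS). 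Hence $\mathcal{G} = m(G)$, which means $G$ is rational.

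Next, I would invoke the Feit-Seitz classification~\cite{FeSe}, according to which the only nonabelian rational finite simple groups are $\mathrm{Sp}(6,2)$ and $O_8^+(2)$. For $O_8^+(2)$, the outer automorphism group is $S_3$ (the group of triality) and acts nontrivially on $\irr G$. The partition of $\irr G$ into orbits of any nontrivial subgroup of $\mathrm{Out}(G)$ (with the corresponding orbit partition on conjugacy classes) is a supercharacter theory distinct from both $m(G)$ and $M(G)$, so $|\mathrm{Sup}(O_8^+(2))| \geq 3$, ruling out this candidate.

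The remaining task, and what I expect to be the main obstacle, is to verify directly that $|\mathrm{Sup}(\mathrm{Sp}(6,2))| = 2$. Since $\mathrm{Sp}(6,2)$ is rational with trivial outer automorphism group, all of the standard constructions that produced extra supercharacter theories in the arguments above (Galois orbits, automorphism orbits, and $*$-products over proper normal subgroups) collapse to $m(G)$ or $M(G)$. To rule out any exotic supercharacter theory I would invoke the Hendrickson-Humphries-Johnson correspondence between supercharacter theories of $G$ and central Schur subalgebras of $\mathbb{C}[G]$, and enumerate such subalgebras directly from the $30 \times 30$ character table of $\mathrm{Sp}(6,2)$. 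This enumeration is finite and naturally carried out via computer algebra, and producing a clean human-readable argument for this last step is where the real work lies.
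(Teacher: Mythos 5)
Your argument matches the paper's proof essentially step for step: reduce via Theorem \ref{one} to a real nonabelian simple group, use the Galois-orbit supercharacter theory to force $G$ to be rational, invoke the Feit--Seitz classification, and eliminate ${\rm SO}^+(8,2)$ by the nontrivial action of its outer automorphism group on $\irr G$. Note that your final paragraph is not needed for this statement: the theorem is only the implication that a nonsolvable group with two supercharacter theories must be ${\rm Sp}(6,2)$, and the verification that ${\rm Sp}(6,2)$ actually has exactly two supercharacter theories is carried out separately in the paper by a computational argument, so your proof of the stated theorem is already complete.
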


\begin{proof}
In light of Theorem \ref{one}, if $G$ is nonsolvable, then $G$ must be a real, simple group.  Let $\mathcal Q$ be the supercharacter theory by taking the orbits of the Galois group of $\mathbb{Q}(G)/\mathbb{Q}$ on $\irr G$, again as on page 2360 of \cite{DiIs}.  It is obvious that $\mathcal Q$ is not $M(G)$, so $\mathcal Q$ must be $m(G)$.  This implies that $G$ is a rational group.  By Corollary B.1 of \cite{FeSe}, the only nonabelian simple, rational groups are ${\rm Sp} (6,2)$ and ${\rm SO}^+ (8,2)$.  In addition, it is known that ${\rm SO}^+ (8,2)$ has a nontrivial outer automorphism $\sigma$ and that $\sigma$ has a nontrivial action on $\irr G$.  If $\mathcal S$ is the supercharacter theory obtained by taking the orbits of $\sigma$ on $\irr G$, we see that $\mathcal S$ is neither $M(G)$ nor $m(G)$.  We conclude that $G$ is not ${\rm SO}^+ (8,2)$, and hence, ${\rm Sp} (6,2)$ is the only remaining possibility.
\end{proof}

At this point, we have shown that if there exists a nonsolvable group with only two supercharacter theories, then that nonsolvable group must be ${\rm Sp} (6,2)$.  We now work to show that ${\rm Sp} (6,2)$ has only two supercharacter theories, and we now develop the tools to do this.  In particular, we will find a means of showing for an arbitrary group $G$ that a subset $X \subseteq \irr G$ cannot be a subset in the character partition of any supercharacter theory of $G$.  These methods are adapted from an algorithm in \cite{AHen}.  We use this idea to eliminate all proper subsets of $\irr {{\rm Sp} (6,2)}$ that contain more than one character from occurring in the character partition of any supercharacter theory of $G$.

For any subset $X\subseteq\irr G$, the \emph{Wedderburn sum} corresponding to $X$, denoted $\sigma_X$, is the character	$\sigma_X = \sum_{\chi\in X}\chi(1)\chi$.  A \emph{partial partition} of $\irr G$ is a set $\mathcal{X} = \{ X_1, \dots, X_n \}$ where each $X_i$ is a nonempty subset of $\irr G$ and $X_i \cap X_j = \emptyset$ when $i \ne j$.  If $\bigcup_{i=1}^n X_i = \irr G$, then $\mathcal X$ is a \emph{partition} of $\irr G$.  If $\mathcal {X}$ and $\mathcal {Y}$ are partial partitions, then $\mathcal {X} \subseteq \mathcal {Y}$ if $X_i \in \mathcal {Y}$ for every $X_i \in \mathcal {X}$.  We say that a partition $\mathcal {Y}$ is a \emph{refinement} of the partition $\mathcal {X}$ if every set $X_i \in \mathcal {X}$ is a union of sets in $\mathcal {Y}$, and we write $\mathcal {Y} \preceq \mathcal {X}$ when $\mathcal {Y}$ is a refinement of $\mathcal {X}$.


We write $\mathrm {cf}(G)$ for the set of complex valued class functions on $G$.  It is well known that $\mathrm {cf} (G)$ is an algebra and $\irr G$ is a basis.  Let $\mathcal{X}$ be a partial partition of $\irr G$ and let $\mathcal{A}$ be the subalgebra of $\mathrm{cf}(G)$ generated by $\{ \sigma_X \mid X\in\mathcal{X} \}$.  We define an equivalence relation $\sim$ on $\irr G$ as follows: $\chi \sim \psi$ if and only if the coefficients of $\chi(1)\chi$ and $\psi(1)\psi$ in $a$ are the same for all $a \in \mathcal{A}$.  Then the \emph{filtration} given by $\mathcal{X}$, denoted $\mathcal{F} (\mathcal{X})$, is the partition of $\irr G$ into equivalence classes with respect to this relation.
%
The algebra $\mathcal{A}$ lies in the span of $\{\sigma_F \mid F \in \mathcal {F} (\mathcal{X})\}$, so each set in $\mathcal{X}$ is a union of sets in $\mathcal{F} (\mathcal{X})$.  When we assume $\mathcal{X}$ to be a subset of a partition which yields a supercharacter theory, we may draw two further conclusions.


\begin{lemma}\label{lem:A.4star}
Let $\mathcal{X}$ be a partial partition of $\irr G$.  Suppose there exists a supercharacter theory $C = (\mathcal{Y}, \mathcal{L})$ such that $\mathcal{X} \subseteq \mathcal{Y}$.  Then $\mathcal {Y}$ is a refinement of $\mathcal{F}(\mathcal{X})$ and $\mathcal{X} \subseteq \mathcal{F}(\mathcal{X})$.
\end{lemma}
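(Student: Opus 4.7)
The plan is to exploit the structural fact that $\mathcal{A}$ sits inside the algebra of superclass functions for the supercharacter theory $C$. Since each generator $\sigma_X$ with $X \in \mathcal{X} \subseteq \mathcal{Y}$ is (up to a positive scalar) the supercharacter of $C$ associated to $X$, every generator is a class function constant on the parts of $\mathcal{L}$; because that algebra is closed under multiplication, $\mathcal{A}$ is contained in it. The set $\{\sigma_Y : Y \in \mathcal{Y}\}$ is a basis for the superclass function algebra, since its elements are linearly independent (disjoint supports in $\irr G$) and there are $|\mathcal{Y}| = |\mathcal{L}|$ of them, matching the dimension. Consequently, any $a\in\mathcal{A}$ has a unique expansion $a = \sum_{Y\in\mathcal{Y}} c_Y \sigma_Y$.

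With that structural fact in place, the refinement $\mathcal{Y} \preceq \mathcal{F}(\mathcal{X})$ follows almost immediately. Writing $a = \sum_{Y} c_Y \sum_{\chi \in Y}\chi(1)\chi$, the coefficient of $\chi(1)\chi$ in $a$ equals $c_Y$, where $Y$ is the unique part of $\mathcal{Y}$ containing $\chi$. So if $\chi$ and $\psi$ lie in the same $Y$, their coefficients agree for every $a\in\mathcal{A}$, meaning $\chi \sim \psi$. Hence each $Y \in \mathcal{Y}$ lies inside a single filtration class, which is exactly the statement that $\mathcal{Y}$ refines $\mathcal{F}(\mathcal{X})$.

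For the second conclusion, I would fix $X \in \mathcal{X}$ and let $F$ be the unique class of $\mathcal{F}(\mathcal{X})$ containing $X$. By the refinement just established, $F$ is a disjoint union of parts of $\mathcal{Y}$, one of which is $X$. Suppose for contradiction that some $\psi \in F \setminus X$ exists, lying in a different part $Y' \in \mathcal{Y}$. The generator $\sigma_X \in \mathcal{A}$ has coefficient $1$ on $\chi(1)\chi$ for each $\chi \in X$ but coefficient $0$ on $\psi(1)\psi$, contradicting $\chi \sim \psi$. Hence $F = X$, so $X \in \mathcal{F}(\mathcal{X})$. The main (and really only) obstacle is identifying the correct ambient algebra for $\mathcal{A}$ and choosing the basis $\{\sigma_Y\}$ in which the ``coefficient of $\chi(1)\chi$'' reading is transparent; once that observation is in place, both assertions reduce to inspecting coefficients in Wedderburn sums.
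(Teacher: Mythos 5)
Your proof is correct, and its overall skeleton is the same as the paper's: embed $\mathcal{A}$ in the linear span of $\{\sigma_Y \mid Y \in \mathcal{Y}\}$, compare coefficients of $\chi(1)\chi$ and $\psi(1)\psi$ to get $\mathcal{Y} \preceq \mathcal{F}(\mathcal{X})$, then combine the refinement with $X \in \mathcal{Y}$ to conclude $X \in \mathcal{F}(\mathcal{X})$. The one genuine difference is in how the key span fact is obtained: the paper simply cites \cite[Lemma 2.1]{And14} for $\langle \sigma_Y \mid Y\in\mathcal{Y}\rangle = \mathrm{Span}\{\sigma_Y \mid Y\in\mathcal{Y}\}$, whereas you prove it inline by observing that each $\sigma_Y$ is (up to a positive scalar) a supercharacter, hence constant on the parts of $\mathcal{L}$, that the space of such functions is closed under multiplication and has dimension $|\mathcal{L}| = |\mathcal{Y}|$, and that the $\sigma_Y$ are linearly independent by disjointness of supports; this makes the argument self-contained at the cost of reproving a standard fact. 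You also handle the final containment in the opposite direction from the paper: the paper chooses a filtration class $F \subseteq X$ and uses disjointness of the parts of $\mathcal{Y}$ to force $X \subseteq F$, while you take the class $F \supseteq X$ given by the refinement and rule out $\psi \in F \setminus X$ directly, since $\sigma_X \in \mathcal{A}$ has coefficient $1$ on $\chi(1)\chi$ for $\chi \in X$ and $0$ on $\psi(1)\psi$; your version even bypasses the paper's preliminary remark that each $X$ is a union of filtration classes. Both routes are sound; yours trades the external citation for a short dimension count.
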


\begin{proof}
	Let $\mathcal{A}$ be the subalgebra of $\mathrm{cf}(G)$ generated by $\{\sigma_X \mid X \in \mathcal{X}\}$.  Then
	\[
		\mathcal{A} = \langle\sigma_X \mid X\in\mathcal{X} \rangle \subseteq \langle \sigma_Y \mid Y\in\mathcal{Y} \rangle = \mathrm{Span}\big\{\sigma_Y \mid Y\in\mathcal{Y}\big\},
	\]
	the last equality following from \cite[Lemma 2.1]{And14}.  Let $Y \in \mathcal{Y}$ and let $\chi,\psi\in Y$; then for all $a\in\mathcal{A}$, the coefficients of $\chi(1)\chi$ and $\psi(1)\psi$ are the same because $a \in \mathrm{Span} \{ \sigma_Z \mid Z\in\mathcal{Y} \}$.  Then by definition of filtration, $\chi$ and $\psi$ lie in the same equivalence class of $\sim$, so they lie in the same set in $\mathcal{F}(\mathcal{X})$, so it follows that $\mathcal{Y} \preceq \mathcal{F}(\mathcal{X})$.
	
Now, for each set $X$ in $\mathcal{X}$, because $X$ is a union of sets in $\mathcal{F}(\mathcal{X})$, choose a set $F \in \mathcal{F}(\mathcal{X})$ such that $F\subseteq X$.  Then since $\mathcal{Y} \preceq \mathcal{F}(\mathcal{X})$, the set $F$ must be a union of sets in $\mathcal{Y}$.  Since the only set of $\mathcal{Y}$ overlapping $X$ is $X$ itself, we have $X\subseteq F$.  Hence $X = F \in \mathcal{F}(\mathcal{X})$.  We conclude that $\mathcal{X} \subseteq \mathcal{F}(\mathcal{X})$, and the proof is complete.
\end{proof}

Thus, the filtration of a partial partition $\mathcal{X}$ is coarser than the character partition of every supercharacter theory which contains $\mathcal{X}$.  By applying this observation to a single set of characters $X \subseteq \irr G$, we derive a necessary condition for the existence of a supercharacter theory in which $X$ is a supercharacter.  Let $G$ be a group and let $X$ be a subset of $\irr G$.  We say $X$ is \emph{good} if $X \in \mathcal{F}(\{X\})$; otherwise we say $X$ is \emph{bad}.

\begin{corollary}\label{cor:SCimpliesGood}
	Let $G$ be a group with exactly $n$ irreducible characters.
	\begin{enumerate}
		\item Let $C = (\mathcal {Y}, \mathcal {L}) \in\mathrm{Sup}(G)$ and let $X \in \mathcal {Y}$.  Then $X$ is good.
		
		\item Let $X$ be a subset of $\irr G$.  Then $X$ is bad if and only if there exist characters $\chi, \psi \in X$ and an integer $j \in \{2,\ldots,n\}$ such that the coefficients of $\chi(1)\chi$ and $\psi(1)\psi$ in $\sigma_X^j$ differ.
	\end{enumerate}
\end{corollary}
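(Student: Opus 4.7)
The plan is to obtain part~(1) immediately from Lemma~\ref{lem:A.4star}, then prove part~(2) by unwinding the definition of $\mathcal{F}(\{X\})$ and exploiting the finite dimension of $\mathrm{cf}(G)$. For part~(1), I would apply Lemma~\ref{lem:A.4star} with the singleton partial partition $\mathcal{X} = \{X\}$: the hypothesis $\{X\} \subseteq \mathcal{Y}$ holds since $X \in \mathcal{Y}$, and the lemma's conclusion $\{X\} \subseteq \mathcal{F}(\{X\})$ says exactly $X \in \mathcal{F}(\{X\})$, which is the definition of good.

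For part~(2), I would first note that $\sigma_X$ alone separates characters in $X$ from those outside: the coefficient of $\chi(1)\chi$ in $\sigma_X$ is $1$ when $\chi \in X$ and $0$ otherwise. Consequently the $\sim$-equivalence class of each $\chi \in X$ is contained in $X$, so $X \in \mathcal{F}(\{X\})$ precisely when all members of $X$ are pairwise $\sim$-equivalent. Hence $X$ is bad if and only if there exist $\chi, \psi \in X$ and $a \in \mathcal{A}$ such that the coefficients of $\chi(1)\chi$ and $\psi(1)\psi$ in $a$ differ.

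To cut the witness down to a single power of $\sigma_X$, observe that $\mathcal{A}$ is spanned by $\{\sigma_X^j : j \geq 1\}$ and sits inside the $n$-dimensional algebra $\mathrm{cf}(G)$; the minimal polynomial of $\sigma_X$ thus has degree at most $n$, and an easy induction shows that $\sigma_X^j$ for every $j > n$ already lies in $\mathrm{span}\{\sigma_X, \sigma_X^2, \ldots, \sigma_X^n\}$. Writing $a = \sum_{j=1}^n \alpha_j \sigma_X^j$, the $j=1$ contributions to the coefficients of $\chi(1)\chi$ and $\psi(1)\psi$ are both equal to $\alpha_1$ for $\chi, \psi \in X$, so they cancel in the difference; thus $a$ distinguishes $\chi$ and $\psi$ if and only if some $\sigma_X^j$ with $j \in \{2, \ldots, n\}$ does, and the reverse direction is immediate since each $\sigma_X^j \in \mathcal{A}$. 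The only subtlety I would flag is the convention that ``the subalgebra generated by $\sigma_X$'' is read here as polynomials in $\sigma_X$ with zero constant term, rather than the unital subalgebra that also contains $1_G$; with the unital reading, the case $1_G \in X$ would necessitate a separate $j = 0$ comparison not detectable among $\sigma_X^2, \ldots, \sigma_X^n$.
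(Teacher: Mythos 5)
Your proposal is correct and follows essentially the same route as the paper: part (1) is the immediate application of Lemma \ref{lem:A.4star} to $\{X\}\subseteq\mathcal{Y}$, and part (2) uses the same two facts the paper does, namely that $\langle\sigma_X\rangle$ is spanned by $\sigma_X,\sigma_X^2,\ldots,\sigma_X^n$ and that the $j=1$ coefficient is $1$ for every member of $X$, so a separating element must already be detected by some power $\sigma_X^j$ with $2\le j\le n$. Your remark about reading $\langle\sigma_X\rangle$ as the non-unital subalgebra matches the paper's implicit convention, so no discrepancy arises.
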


\begin{proof}
	Let $C=(\mathcal{Y},\mathcal{L})$ and let $X \in \mathcal {Y}$.  Then $\{ X \} \subseteq \mathcal{Y}$, so by Lemma \ref{lem:A.4star}, we know $\{X\} \subseteq \mathcal{F}(\{X\})$.  By definition $X$ is good, proving conclusion (1).
	
	Let $X$ be a subset of $\irr G$, and suppose $X$ is bad.  Now, $X$ is a union of sets in $\mathcal{F}(\{X\})$, but $X$ is not itself a set in $\mathcal{F}(\{X\})$, because it is bad.  Therefore, there exist characters $\chi, \psi \in X$ that lie in different sets in $\mathcal{F}(\{X\})$; so there exists some element $a \in \langle \sigma_X \rangle$ such that the coefficients of $\chi(1)\chi$ and $\psi(1)\psi$ in $a$ are different.  Now, because $\langle \sigma_X \rangle \subseteq \mathrm{cf} (G)$ is at most $n$ dimensional, it follows that $\langle\sigma_X\rangle$ is spanned by $\{\sigma_X,\sigma_X^2,\ldots,\sigma_X^n\}$.  If the coefficient of $\chi(1)\chi$ in $\sigma_X^j$ equals that of $\psi(1)\psi$ for all $j\in\{1, \ldots, n\}$, it would follow that the coefficients of $\chi(1)\chi$ and $\psi(1)\psi$ would be identical for every element $a$ in $\langle \sigma_X \rangle$.  This implies that $\chi \sim \psi$, and thus, $\chi$ and $\psi$ lie in the same set of $\mathcal{F} (\{X \})$, a contradiction.  Hence, there exists some integer $j \in \{1, \ldots, n\}$ such that $\chi(1)\chi$ and $\psi(1)\psi$ have different coefficients in $\sigma_X^j$, and $j$ is certainly not $1$.
	
	On the other hand, if there exist different characters $\chi, \psi \in X$ and an integer $j \in \{ 2, \ldots, n \}$ such that $\chi(1)\chi$ and $\psi(1)\psi$ have different coefficients in $\sigma_X^j$, then $\chi$ and $\psi$ lie in different sets in $\mathcal{F}(\{X\})$, so $X$ is not in $\mathcal{F}(\{X\})$.  This implies that $X$ is bad, and completes the proof of conclusion (2).
\end{proof}

Note that Corollary \ref{cor:SCimpliesGood} (1) implies that if there is a supercharacter theory $C$ for $G$ so that $X$ is a set in the partition of $\irr G$, then $X$ must be good.  However the converse of Corollary \ref{cor:SCimpliesGood} (1) is not true in general.  For example, let $X$ be the set of all irreducible characters of $S_7$ of degree $14$. Although the algorithm below can be used to show that $X$ is good, computer computations indicate that $X$ is not a part of any supercharacter theory of $S_7$. 

By Corollary \ref{cor:SCimpliesGood} (1), it will follow that ${\rm Sp} (6,2)$ has only the two trivial supercharacter theories once we show that the only good subsets of $\irr G \setminus \{1_G\}$ are the whole set and its singleton subsets.  By Corollary \ref{cor:SCimpliesGood} (2), we need only to check powers of Wedderburn sums against their underlying sets' filtrations to determine whether a subset is good.  The following algorithm does exactly this.

\begin{algorithm}\label{alg:isgood}
	Given any group $G$ and any subset $X = \{ \chi_i \mid i \in I\}$ of $\irr G$, this algorithm determines if $X$ is good.  Suppose $G$ has precisely $n$ conjugacy classes with representatives $g_1,\ldots,g_n$ and irreducible characters $\chi_1,\ldots,\chi_n$, and let $T$ be the character table, represented as an $n\times n$ matrix whose $i,j$ entry is $\chi_i(g_j)$.
	\begin{enumerate}
		\item Form the Wedderburn sum $\sigma_X$ corresponding to $X$.  Then with respect to the conjugacy class identifier basis, $\sigma_X$ takes the form
		\[
			\sigma_X = \sum_{j=1}^n\sigma_X(g_j)\delta_{[g_j]},
		\]
		where $\delta_{[g_j]}$ is the indicator function of the conjugacy class containing $g_j$.
		\item For each $k\in\{2,\ldots,n\}$, do the following.
		\begin{enumerate}
			\item Consider the $k$th tensor power of $\sigma_X$; with respect to the above basis, we have
			\[
				\sigma_X^k = \sum_{j=1}^n\sigma_X(g_j)^k\delta_{[g_j]}.
			\]
			With respect to the irreducible character basis, write
			\[
				\sigma_X^k = \sum_{i=1}^nc_{i,k}\chi_i(1)\chi_i.
			\]
			\item Set $m=\min(I)$.  If there exists $i\in I$ for which $c_{i,k}\neq c_{m,k}$, return bad.
			\item Otherwise, continue.
		\end{enumerate}
		\item Return good.
	\end{enumerate}
\end{algorithm}

Now, we may convert this algorithm into SAGE code.  Let \verb|T| be a SAGE matrix representing the character table of $\mathrm{Sp}(6,2)$ as in Algorithm \ref{alg:isgood}; such a matrix can be computed from GAP or MAGMA.  Let \verb|S| be the inverse of \verb|T|.  The following function is a SAGE implementation of Algorithm \ref{alg:isgood}.  Note that subsets of $\mathrm{Irr}(G)$ are represented by subsets of $\{0,1,\ldots,29\}$, where the ordering on $\mathrm{Irr}(G)$ is determined by GAP.

\bigskip

\begin{verbatim}
def is_good(X):
  sigma = matrix([0]*30)
  for i in X:
    sigma = sigma + matrix([T[i,0]*k for k in T[i]])
  power_of_sigma = sigma
  for i in range(2,31):
    power_of_sigma = power_of_sigma.elementwise_product(sigma)
    new_basis = power_of_sigma*S
    for j in X[1:]:
      if new_basis[0,j]/T[j,0] != new_basis[0,X[0]]/T[X[0],0]:
        return False
  return True
\end{verbatim}

\bigskip

We then record the output of this function for each subset of $\{ 0, 1, \ldots, 29 \}$.  In order to reduce runtime, we recorded the output only on subsets of $\{ 0, 1, \ldots, 29 \}$ which we did not already know to be good, i.e., proper subsets of $\{ 1, \ldots, 29 \}$ which are not singletons, and the algorithm returned false for every proper subset of $\{ 0, 1, \ldots, 29 \}$ which is not a singleton.  Since none of these subsets were good, it follows that $\mathrm{Sp}(6,2)$ has no supercharacter theory where the partition of $\irr G - \{ 1_G \}$ does not consist of a single set or of singleton sets.  Thus, the minimal and maximal supercharacter theories are the only supercharacter theories of $\mathrm{Sp} (6,2)$.  We conclude that $\mathrm{Sp} (6,2)$ has exactly two supercharacter theories.  Note that applying this method to other groups will not find nontrivial supercharacter theories, however it can negatively answer the question of whether given sets of irreducible characters can occur in the partition of characters for some supercharacter theory.

\end{document}